\newif\ifarXiv
\arXivfalse
\newif\ifWP
\WPfalse
\newif\ifLATIN
\LATINfalse

\arXivtrue		

\ifarXiv\LATINtrue\fi	

\newif\ifnotLATIN	
\notLATINtrue
\ifLATIN\notLATINfalse\fi

\ifarXiv
  \newcommand{\DFVIII}{vovk:arXiv0606093}
\fi
\ifWP
  \newcommand{\DFVIII}{GTP17}
\fi

\ifnotLATIN
  
\fi
\ifLATIN
  
\fi

\ifarXiv
\documentclass{article}
\usepackage{amsmath,amsfonts,amssymb,latexsym}
\newcommand{\Extra}[1]{}
\fi

\ifWP
\documentclass[toc]{gtarticle}
\usepackage{amsmath,amsfonts,amssymb,latexsym,epsfig}
\renewcommand{\Extra}[1]{#1}
\fi

\emergencystretch=5mm
\tolerance=400
\allowdisplaybreaks[4]

\ifnotLATIN
\input{OT2enc.def}

\usepackage{CJK}
\fi

\newcommand{\Vladimir}{Vladimir}
\newcommand{\DOT}{.}
\newcommand{\zzrelax}[1]{}

\newcommand{\K}{\mathcal{K}}

\newcommand{\bbbr}{\mathbb{R}}		

\newtheorem{theorem}{Theorem}
\newtheorem{corollary}{Corollary}
\newenvironment{proof}
  {\trivlist\item[\hskip\labelsep\textbf{Proof}]}
  {\endtrivlist}

\newcommand{\boxforqed}{\rule{.3em}{1.5ex}}
\newcommand{\qedtext}{\unskip\nobreak\hfil
  \penalty50\hskip1em\null\nobreak\hfil\boxforqed
  \parfillskip=0pt\finalhyphendemerits=0\endgraf}

\newenvironment{remark*}
  {\trivlist\item[\hskip\labelsep{\bfseries Remark}]\relax}
  {\endtrivlist}

\newlength{\IndentI}
\newlength{\IndentII}
\newlength{\IndentIII}
\setlength{\IndentI}{7mm}
\setlength{\IndentII}{14mm}
\setlength{\IndentIII}{21mm}
\newlength{\WidthI}
\newlength{\WidthII}
\newlength{\WidthIII}
\setlength{\WidthI}{\textwidth}
\setlength{\WidthII}{\textwidth}
\setlength{\WidthIII}{\textwidth}
\addtolength{\WidthI}{-\IndentI}
\addtolength{\WidthII}{-\IndentII}
\addtolength{\WidthIII}{-\IndentIII}

\title{Hoeffding's inequality\\in game-theoretic probability}

\ifarXiv
\author{Vladimir Vovk\\
\texttt{vovk{\rm@}cs.rhul.ac.uk}\\
\texttt{http://vovk.net}}
\fi

\ifWP
\author{Vladimir Vovk}

\fi

\begin{document}
\maketitle
\begin{abstract}
  This note makes the obvious observation
  that Hoeffding's original proof of his inequality
  remains valid in the game-theoretic framework.
  All details are spelled out for the convenience of future reference.
\end{abstract}

\section{Introduction}

The game-theoretic approach to probability was started by von Mises
and greatly advanced by Ville \cite{ville:1939};
however, it has been overshadowed by Kolmogorov's measure-theoretic approach
\cite{kolmogorov:1933}.
The relatively recent book \cite{shafer/vovk:2001}
contains game-theoretic versions of several results of probability theory,
and it argues that the game-theoretic versions have important advantages
over the conventional measure-theoretic versions.
However, \cite{shafer/vovk:2001} does not contain
any large-deviation inequalities.
This note fills the gap by stating the game-theoretic version
of Hoeffding's inequality (\cite{hoeffding:1963}, Theorem 2).

\section{Hoeffding's supermartingale}

This section presents perhaps the most useful product of Hoeffding's method,
a non-negative supermartingale starting from 1.
This supermartingale will easily yield Hoeffding's inequality in the following section.

This is a version of the basic forecasting protocol from \cite{shafer/vovk:2001}:

\bigskip

\noindent
\textsc{Game of forecasting bounded variables}

\smallskip

\noindent
\textbf{Players:} Sceptic, Forecaster, Reality

\smallskip

\noindent
\textbf{Protocol:}

\parshape=6
\IndentI  \WidthI
\IndentI  \WidthI
\IndentII \WidthII
\IndentII \WidthII
\IndentII \WidthII
\IndentII \WidthII
\noindent
Sceptic announces $\K_0\in\bbbr$.\\
FOR $n=1,2,\dots$:\\
  Forecaster announces interval $[a_n,b_n]\subseteq\bbbr$ and number $\mu_n\in(a,b)$.\\
  Sceptic announces $M_n\in\bbbr$.\\
  Reality announces $x_n\in[a_n,b_n]$.\\
  Sceptic announces $\K_n \le \K_{n-1} + M_n (x_n - \mu_n)$.

\bigskip

\noindent
On each round $n$ of the game
Forecaster outputs an interval $[a_n,b_n]$ which, in his opinion,
will cover the actual observation $x_n$ to be chosen by Reality,
and also outputs his expectation $\mu_n$ for $x_n$.
The forecasts are being tested by Sceptic,
who is allowed to gamble against them.
The expectation $\mu_n$ is interpreted as the price of a ticket
which pays $x_n$ after Reality's move becomes known;
Sceptic is allowed to buy any number $M_n$,
positive, zero, or negative,
of such tickets.
When $x_n$ falls outside $[a_n,b_n]$,
Sceptic becomes infinitely rich;
without loss of generality
we include the requirement $x_n\in[a_n,b_n]$ in the protocol;
furthermore, we will always assume that $\mu_n\in(a_n,b_n)$.
Sceptic is allowed to choose his initial capital $\K_0$
and is allowed to throw away part of his money at the end of each round.

It is important that the game of forecasting bounded variables
is a perfect-information game:
each player can see the other players' moves
before making his or her (Forecaster and Sceptic are male and Reality is female) own move;
there is no randomness in the protocol.

A \emph{process} is a real-valued function
defined on all finite sequences
$(a_1,b_1,\mu_1,x_1,\ldots,a_N,b_N,\mu_N,x_N)$,
$N=0,1,\ldots$,
of Forecaster's and Reality's moves in the game of forecasting bounded variables.
If we fix a strategy for Sceptic,
Sceptic's capital $\K_N$, $N=0,1,\ldots$,
become a function of Forecaster's and Reality's previous moves;
in other words,
Sceptic's capital becomes a process.
The processes that can be obtained this way
are called (game-theoretic) \emph{supermartingales}.

The following theorem is essentially inequality (4.16) in \cite{hoeffding:1963}.
\begin{theorem}\label{thm:super}
  For any $h\in\bbbr$,
  the process
  \begin{equation*}
    \prod_{n=1}^N
    \exp
    \left(
      h(x_n-\mu_n)
      -
      \frac{h^2}{8} (b_n-a_n)^2
    \right)
  \end{equation*}
  is a supermartingale.
\end{theorem}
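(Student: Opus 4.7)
The plan is to construct an explicit strategy for Sceptic whose capital process equals the claimed product; by the definition given above, this will make the process a supermartingale. Because the product telescopes, it suffices to work one round at a time: set $\K_0=1$ and, at round $n$, find $M_n$ such that
$$\K_{n-1}\exp\!\left(h(x_n-\mu_n)-\tfrac{h^2}{8}(b_n-a_n)^2\right)\le\K_{n-1}+M_n(x_n-\mu_n)$$
for every $x_n\in[a_n,b_n]$; Sceptic then declares $\K_n$ to be the left-hand side. All iterates are positive, so dividing by $\K_{n-1}$ is safe, and the one-step problem reduces to finding $c\in\bbbr$ with $\exp(h(x-\mu_n)-h^2(b_n-a_n)^2/8)\le 1+c(x-\mu_n)$ on $[a_n,b_n]$.

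For this, I would invoke convexity of $x\mapsto e^{hx}$ to bound the exponential above by its secant on the interval,
$$e^{h(x-\mu_n)}\le\frac{b_n-x}{b_n-a_n}e^{h(a_n-\mu_n)}+\frac{x-a_n}{b_n-a_n}e^{h(b_n-\mu_n)}.$$
Writing the right-hand side as $L+M(x-\mu_n)$ and multiplying through by $e^{-h^2(b_n-a_n)^2/8}$, the required pointwise inequality will follow with $c:=Me^{-h^2(b_n-a_n)^2/8}$ provided $L\le e^{h^2(b_n-a_n)^2/8}$.

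The main obstacle is this last bound, which is Hoeffding's lemma. Setting $u:=h(b_n-a_n)$, $p:=(\mu_n-a_n)/(b_n-a_n)$, and $q:=1-p$, it is equivalent to showing $\phi(u)\le u^2/8$ for $\phi(u):=-pu+\log(q+pe^u)$. I would verify by direct computation that $\phi(0)=\phi'(0)=0$ and $\phi''(u)=pqe^u/(q+pe^u)^2\le 1/4$ (the bound on $\phi''$ follows from AM--GM applied to the positive summands $q$ and $pe^u$ in the denominator), and then apply Taylor's theorem with remainder. Everything else in the proof is straightforward algebra.
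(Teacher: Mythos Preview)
Your proposal is correct and follows essentially the same route as the paper's proof: reduce to a single round, choose $M_n$ as ($\K_{n-1}$ times) the slope of the secant of $e^{h(\cdot-\mu_n)}$ on $[a_n,b_n]$, use convexity to replace the exponential by its secant, and then verify the constant term via Hoeffding's lemma by showing the relevant log has vanishing value and first derivative at $0$ and second derivative bounded by $1/4$. The only cosmetic differences are that the paper first normalizes to $\mu_n=0$ and phrases the second-derivative bound as ``$u(1-u)\le 1/4$'' rather than AM--GM, while you keep $\mu_n$ general and use the $(p,q,u)$ parametrization.
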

\begin{proof}
  Assume, without loss of generality,
  that Forecaster is additionally required
  to always set $\mu_n:=0$.
  (Adding the same constant to $a_n$, $b_n$, and $\mu_n$
  will not change anything for Sceptic.)
  Now we have $a_n<0<b_n$.

  It suffices to prove that on round $n$ Sceptic can make a capital of $\K$
  into a capital of at least
  \begin{equation*}
    \K
    \exp
    \left(
      h x_n
      -
      \frac{h^2}{8} (b_n-a_n)^2
    \right);
  \end{equation*}
  in other words,
  that he can obtain a payoff of at least
  \begin{equation*}
    \exp
    \left(
      h x_n
      -
      \frac{h^2}{8} (b_n-a_n)^2
    \right)
    -
    1
  \end{equation*}
  using the available tickets
  (paying $x_n$ and costing $0$).
  This will follow from the inequality
  \begin{equation*}
    \exp
    \left(
      h x_n
      -
      \frac{h^2}{8} (b_n-a_n)^2
    \right)
    -
    1
    \le
    x_n
    \frac{e^{h b_n}-e^{h a_n}}{b_n-a_n}
    \exp
    \left(
      -
      \frac{h^2}{8} (b_n-a_n)^2
    \right),
  \end{equation*}
  which can be rewritten as
  \begin{equation}\label{eq:goal}
    \exp
    \left(
      h x_n
    \right)
    \le
    \exp
    \left(
      \frac{h^2}{8} (b_n-a_n)^2
    \right)
    +
    x_n
    \frac{e^{h b_n}-e^{h a_n}}{b_n-a_n}.
  \end{equation}

  Our goal is to prove (\ref{eq:goal}).
  By the convexity of the function $\exp$,
  it suffices to prove
  \begin{equation*}
    \frac{x_n-a_n}{b_n-a_n}
    e^{h b_n}
    +
    \frac{b_n-x_n}{b_n-a_n}
    e^{h a_n}
    \le
    \exp
    \left(
      \frac{h^2}{8} (b_n-a_n)^2
    \right)
    +
    x_n
    \frac{e^{h b_n}-e^{h a_n}}{b_n-a_n},
  \end{equation*}
  i.e.,
  \begin{equation*}
    \frac
    {
      b_n e^{h a_n}
      -
      a_n e^{h b_n}
    }
    {b_n-a_n}
    \le
    \exp
    \left(
      \frac{h^2}{8} (b_n-a_n)^2
    \right),
  \end{equation*}
  i.e.,
  \begin{equation}\label{eq:simpler}
    \ln
    \left(
      b_n e^{h a_n}
      -
      a_n e^{h b_n}
    \right)
    \le
    \frac{h^2}{8} (b_n-a_n)^2
    +
    \ln(b_n-a_n).
  \end{equation}
  The derivative of the left-hand side of (\ref{eq:simpler}) is
  \begin{equation*}
    \frac
    {
      a_n b_n e^{h a_n}
      -
      a_n b_n e^{h b_n}
    }
    {
      b_n e^{h a_n}
      -
      a_n e^{h b_n}
    }
  \end{equation*}
  and the second derivative, after cancellations and regrouping, is
  \begin{equation*}
    (b_n-a_n)^2
    \frac
    {
      \left(
        b_n e^{h a_n}
      \right)
      \left(
        -a_n e^{h b_n}
      \right)
    }
    {
      \left(
        b_n e^{h a_n}
        -
        a_n e^{h b_n}
      \right)^2
    }.
  \end{equation*}
  The last ratio is of the form $u(1-u)$ where $0<u<1$.
  Hence it does not exceed $1/4$,
  and the second derivative itself does not exceed $(b_n-a_n)^2/4$.
  Inequality (\ref{eq:simpler}) now follows from the second-order Taylor expansion
  of the left-hand side around $h=0$.
  \qedtext
\end{proof}

The relation between the game-theoretic and measure-theoretic approaches to probability
is described in \cite{shafer/vovk:2001}, Chapter 8.
Intuitively,
the generality of the game-theoretic protocol
stems from the fact that Forecaster is not asked to produce
a full-blown probability forecast for $x_n$:
only the elements ($a_n,b_n,\mu_n$)
that we really need for our mathematical result
enter the game of forecasting bounded variables.
Besides, the players are allowed to react to each other moves;
in particular, Reality may react to Forecaster's moves
and both Reality and Forecaster may react to Sceptic's moves
(the latter is important in applications to defensive forecasting:
see, e.g., \cite{\DFVIII}).
It is remarkable that many measure-theoretic proofs
carry over in a straightforward manner to game-theoretic probability.

\section{Hoeffding's inequality}

We start from the definition of upper probability,
a game-theoretic counterpart (along with lower probability)
of the standard measure-theoretic notion of probability.
Suppose the game of forecasting bounded variables
lasts a known number $N$ of rounds.
(See \cite{shafer/vovk:2001} for the general definition.)
The \emph{sample space} is the set of all sequences
$(a_1,b_1,\mu_1,x_1,\ldots,a_N,b_N,\mu_N,x_N)$
of Forecaster's and Reality's moves in the game.
An \emph{event} is a subset of the sample space.
The \emph{upper probability} of an event $E$
is the infimum of the initial value of non-negative supermartingales
that take value at least $1$ on $E$.
(See \cite{shafer/vovk:2001}, Chapter 8,
for a demonstration that this definition agrees with measure-theoretic probability.)

Theorem \ref{thm:super} immediately gives Hoeffding's inequality
(cf.\ \cite{hoeffding:1963}, the proof of Theorem 2)
when combined with the definition of game-theoretic probability:
\begin{corollary}\label{cor:Hoeffding}
  Suppose the game of forecasting bounded variables
  lasts a fixed number $N$ of rounds.
  If all $a_n$ and $b_n$ are given in advance
  and $t>0$ is a known constant,
  the upper probability of the event
  \begin{equation}\label{eq:event}
    \frac1N
    \sum_{n=1}^N
    (x_n-\mu_n)
    \ge
    t
  \end{equation}
  does not exceed
  \begin{equation*}
    e^{-2N^2t^2/C},
  \end{equation*}
  where $C := \sum_{n=1}^N (b_n-a_n)^2$.
\end{corollary}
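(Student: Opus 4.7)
The plan is to use Theorem~\ref{thm:super} in the standard Chernoff-style way, where the supermartingale plays the role of the exponential moment generating function in the measure-theoretic proof. Because all $a_n,b_n$ are announced in advance and $N$ is fixed, the constant $C$ is known, and we can pick the free parameter $h>0$ optimally at the outset.

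First I would fix an arbitrary $h>0$ and consider the process
\begin{equation*}
  S_N
  :=
  \prod_{n=1}^N
  \exp\left( h(x_n-\mu_n) - \frac{h^2}{8}(b_n-a_n)^2 \right)
  =
  \exp\left( h \sum_{n=1}^N (x_n-\mu_n) - \frac{h^2}{8} C \right),
\end{equation*}
which by Theorem~\ref{thm:super} is a non-negative supermartingale starting at $1$. On the event (\ref{eq:event}), we have $\sum_{n=1}^N (x_n-\mu_n)\ge Nt$, and since $h>0$ this yields $S_N \ge \exp(hNt - h^2 C/8)$.

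Next I would rescale: the process $S_N \exp(-hNt + h^2 C/8)$ is a non-negative supermartingale whose value is at least $1$ on the event (\ref{eq:event}), and its initial value is $\exp(-hNt + h^2 C/8)$. By the definition of upper probability given just before the corollary, the upper probability of (\ref{eq:event}) is therefore bounded above by
\begin{equation*}
  \exp\left(-hNt + \frac{h^2 C}{8}\right)
\end{equation*}
for every $h>0$.

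Finally I would minimize the exponent over $h>0$ by elementary calculus: setting the derivative $-Nt + hC/4$ to zero gives $h = 4Nt/C$, and substitution produces the exponent $-2N^2 t^2/C$, which is exactly the bound claimed. There is no real obstacle here; all the genuine analytic work is concentrated in Theorem~\ref{thm:super}, and the present statement is just a Chernoff/Markov-type deduction together with the optimal choice of $h$.
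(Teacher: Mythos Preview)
Your proof is correct and follows the same approach as the paper's own proof: apply Theorem~\ref{thm:super}, bound the supermartingale from below on the event~(\ref{eq:event}), and optimize the free parameter to $h=4Nt/C$. You simply make explicit the rescaling step (multiplying the supermartingale by the constant $\exp(-hNt+h^2C/8)$ so that its terminal value is at least~$1$ on the event) that the paper leaves implicit in its appeal to the definition of upper probability.
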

(The reader will see that it is sufficient for Sceptic to know only $C$
at the start of the game,
not the individual $a_n$ and $b_n$.)
\begin{proof}
  The supermartingale of Theorem \ref{thm:super} starts from $1$
  and achieves
  \begin{equation}\label{eq:achieves}
    \prod_{n=1}^N
    \exp
    \left(
      h(x_n-\mu_n)
      -
      \frac{h^2}{8} (b_n-a_n)^2
    \right)
    \ge
    \exp
    \left(
      hNt - \frac{h^2}{8}C
    \right)
  \end{equation}
  on the event~(\ref{eq:event}).
  The right-hand side of (\ref{eq:achieves}) attains its maximum at $h:=4Nt/C$,
  which gives the statement of the corollary.
  \qedtext
\end{proof}

\begin{remark*}
  The measure-theoretic counterpart of Corollary \ref{cor:Hoeffding}
  is sometimes referred to as the Hoeffding--Azuma inequality,
  in honour of Kazuoki Azuma
  \ifnotLATIN(\begin{CJK*}[dnp]{JIS}{min}¸ãºÊ °ì¶½\end{CJK*})\fi
  \cite{azuma:1967}.
  The martingale version, however, is also stated in Hoeffding's paper
  (\cite{hoeffding:1963}, the end of Section 2).
\end{remark*}

\subsection*{Acknowledgments}

This work is inspired by a question asked by Yoav Freund.
It has been partially supported by EPSRC (grant EP/F002998/1).


\begin{thebibliography}{1}

\bibitem{azuma:1967}
Kazuoki Azuma.
\newblock Weighted sums of certain dependent random variables.
\newblock {\em Tohoku Mathematical Journal}, 68:357--367, 1967.

\bibitem{hoeffding:1963}
Wassily Hoeffding.
\newblock Probability inequalities for sums of bounded random variables.
\newblock {\em Journal of the American Statistical Association}, 58:13--30,
  1963.

\bibitem{kolmogorov:1933}
Andrei~N\DOT{} Kolmogorov.
\newblock {\em Grundbegriffe der Wahr\-schein\-lich\-keits\-rechnung}.
\newblock Springer, Berlin, 1933.
\newblock English translation: \emph{Foundations of the Theory of Probability}.
  Chelsea, New York, 1950.

\bibitem{shafer/vovk:2001}
Glenn Shafer and \Vladimir{} Vovk.
\newblock {\em Probability and Finance: It's Only a Game!}
\newblock Wiley, New York, 2001.

\bibitem{ville:1939}
Jean Ville.
\newblock {\em Etude critique de la notion de collectif}.
\newblock Gauthier-Villars, Paris, 1939.

\bibitem{vovk:arXiv0606093}
\Vladimir{} Vovk.
\newblock Predictions as statements and decisions.
\newblock Technical Report \texttt{arXiv:cs/0606093} [cs.LG],
  \texttt{arXiv.org} e-Print archive, June 2006.

\end{thebibliography}
\end{document}